 \newtheorem{dfn}{Definition}
 \newtheorem{prp}{Proposition}
\newtheorem{remark}{Remark}
 \newtheorem{notation}{Notation}
\newtheorem{cor}{Corollary}
\newcommand{\R}{\mathbb{R}} 
\newcommand{\N}{\mathbb{N}}
\begin{document}
\title{A systematic approach for doing an \textit{a priori} identifiability study of dynamical nonlinear models.}
 
\maketitle

\begin{center}
N. Verdi\`ere$^1$ and S. Orange\footnote{Normandie Univ, UNIHAVRE, LMAH, FR-CNRS-3335, ISCN, 76600 Le Havre, France\\
(Corresponding authors: Nathalie.Verdiere@univ-lehavre.fr, Sebastien.Orange@univ-lehavre.fr)}\end{center}

\begin{abstract}
This paper presents a method for investigating, through an automatic procedure, the (lack of) identifiability of parametrized dynamical models. This method takes into account constraints on parameters and returns parameters whose estimations make the model identifiable. It is based on i) an equivalence between an extension of the notion of identifiability and the existence of solutions of algebraic systems, ii) the use of symbolic computations for testing their existence.

This method is described in details and is applied to two examples, of which the last one involves 12 parameters. 

\end{abstract}

\textbf{Keywords :} Relative identifiability; Nonlinear models; Algorithm


\pagestyle{myheadings}
\thispagestyle{plain}
\markboth{ }{ }

\section{Introduction}
Since several years, the study of identifiability of parametrized models generates a growing interest in many scientific domains \cite{Pironet2016,Walsh2016}.
Indeed, when a new model is developed, some parameters may not be directly accessible from the experiments. For evaluating them, some numerical procedures using real data have been developed \cite{VDJ,Tarantola2005}. The identifiability guarantees that from these real data correspond exactly one parameter vector. Thus, if the model is not identifiable, the output of the numerical procedure may not return the correct parameter vector.

Finally, when one wants to study this property, the following main questions arise naturally: 
\begin{enumerate}
\item Is the model identifiable? How initial conditions, and more generally constraints on parameters, can be taken into account in this question?
\item Is it possible to obtain the lists of key parameters whose estimation turns the unidentifiable model into an identifiable one? 
\end{enumerate}

For the first question, several methods can be found in the literature  (see~\cite{chapman,chappell,dj,lili:2004,DF,Pohj,Janzen2016,FG,ljung,Stigter2015,W}) and as it was noticed in \cite{dj,SAD,VDJ}, the identifiability of a model may depend on the consideration of initial conditions or more generally constraints on parameters. For the second question, there is no answer at our knowledge. \\

In order to answer to these two questions by the mean of an automatized procedure, our present works used as a first step the classical method based on the Rosenfeld-Groebner algorithm (see~\cite{boulier:tel-00137866,NOLCOS}). This algorithm permits to eliminate state variables of dynamical systems and returns relations linking outputs, inputs and unknown parameters of the initial system. These relations, called \textit{input-output polynomials} are the basis of the identifiability method developed in \cite{NOLCOS}. This method consists in checking the identifiability of the model through the study of the coefficients of the input-output polynomials. In the first version of this method, initial conditions were ignored and few developments were proposed to consider them (see  \cite{dj,SAD,VDJ}). 



The first contribution of this paper, presented in section \ref{section1}, is a theoretical one. Since the identifiability of a parameter depends on the identifiability or not of some other parameters, the classical identifiability definition have to be extended. For this purpose, we propose the notion of \textit{relative identifiability}, that is identifiability of a given parameter when some other parameters are supposed to be known. We then generalize a result established in \cite{dj} for classical identifiability. This result links output trajectories and solutions of  algebraic systems built on input-output polynomials. More precisely, we give an equivalence between the relative identifiability of a parameter and the emptiness of the solution set of equations and/or inequalities system. The latter can be tested with computer algebra softwares. This theoretical approach permits to obtain an automatized approach for studying the (relative) identifiability of systems.\\ 

The second contribution, presented in section \ref{section2}, is an algorithm based on this equivalence. It returns lists of parameters, each of these lists giving a set of key parameters in order to obtain the identifiability of the model. Our algorithm can take into account the definition domain of the parameters and, possibly, some algebraic relations, for example, those provided by initial conditions. This new tool can help an experimenter to elaborate strategies about parameters that should be estimated to obtain the identifiability of the model. Notice that the equivalence we establish in section~\ref{section1} ensures that the set of lists provided by our algorithm is exhaustive for the considered model.  
In section~\ref{secexemples}, our algorithm is applied to two examples. The first one revisits the identifiability of microbial growth in a batch reactor, the second one concerns an epidemiological model and involves 12 parameters.


\section{Relative identifiability}\label{section1}
\subsection{Class of systems}
All along this paper, we consider nonlinear dynamical parametrized models (controlled or uncontrolled) of the following form:
\begin{equation}\label{saresoudre}
\Gamma^{\Theta}
\left\{\begin{array}{l}
\dot{x}(t,\Theta )=g(x(t,\Theta  ),u(t),\Theta  ),\\
y(t,\Theta )=h(x(t,\Theta  ),u(t),\Theta  ), 
\end{array}\right.
\end{equation}
where: 
\begin{itemize}
\item $x(t,\Theta )\in \R^n$ denotes the state variables and $y(t,\Theta )\in \R^s$ the outputs respectively,
\item the functions $g$ and $h$ are real, rational and {analytic }on $M$\footnote{These assumptions are not restrictive since lots of models can be reduced to a rational and{ analytic} model by variable change (see.~\cite{SAB}).}, where $M$ is an open subset of $\R^n$ such that $x(t,\Theta )\in M$ for every $t\in [t_0,\ T]$. $T$ is a finite or infinite time bound,
\item $u(t)\in R^r$ is the control vector, 
\item the vector of parameters $\Theta =(\theta_1,\ldots,\theta_m)$ belongs to a subset $\mathcal{D}$ of $ \subseteq \R^{m}$ where  $\mathcal{D}$ is an, a priori, known set of admissible parameters.
\end{itemize}
When an initial condition $x_0\in \R^n$ is given, the supplementary equation $ \,x(0,\Theta)=x_0$ can be added to System~(\ref{saresoudre}).

\subsection{Admissible parameters and semialgebraic sets}\label{DefC}
In many models, it is natural to work only on a subset $\mathcal{C}$ of the set of admissible parameters $\mathcal{D}$ when considering additional constraints on the parameter vector such as initial conditions. This may be the case either to ensure the existence of solutions or because of the nature of the problem. For example, in trophic chains studies, number of preys eaten by predators can not be greater than the total population of preys and in mechanic, a mass can not be negative. 
 
From now on, we will suppose that these constraints on $\Theta$ can be formulated by the mean of algebraic equations and/or inequalities. This leads naturally to consider semialgebraic sets:

\begin{dfn} {(See~\cite{Basu:2006:ARA:1197095})} A set of real solutions of a finite set of polynomial equations and/or polynomial inequalities is called a semialgebraic set.
\end{dfn}
    
The introduction of semialgebraic sets enables us to use efficient computer algebra libraries developed for manipulating this kind of mathematical object like the Maple packages \verb Raglib  (See~\cite{RAGLib}) or \verb SemiAlgebraicSetTools (See also~\cite{qepcadb,DISCOVERED}, for example).

From now on, we suppose that:
\begin{itemize}
\item $C(\Theta)$ denotes the set of all algebraic equations and inequalities verified by the components of the parameter vector $\Theta \in \mathcal{D}$ of the model;
\item the set $\mathcal{C}$  is the semialgebraic set defined by $C(\Theta)$.
 \end{itemize}

\subsection{Relative identifiability}\label{identifiability}
 If initial conditions are not considered in (\ref{saresoudre}), authors in~\cite{ljung} have proposed the following definition of identifiability in the framework of algebra: 

\begin{dfn}\label{defident}
The model  $\Gamma^{\Theta} $ is said \textit{globally identifiable} if there exists an input $u$ such that the set of outputs generated by $\tilde \Theta \in \mathcal{D}$ has an empty intersection with those generated by $ \Theta $ when $\Theta \neq \tilde{\Theta}$. 
\end{dfn}
In the same way, a local definition had also been proposed in~\cite{ljung}: $\Gamma^{\Theta} $ is said \textit{locally identifiable} if the latter definition is verified in a neighborhood $\nu(\Theta) \subset \mathcal{D}$ of $\Theta$. In the case of known initial conditions, model $\Gamma^{\Theta}$ has a unique solution and the identifiability definition consists in verifying that, for two different parameter vectors $ \Theta$ and $\tilde \Theta$ in $\mathcal{D}$, systems $\Gamma^{ \Theta}$ and $\Gamma^{\tilde \Theta}$ yield different outputs. In the case of uncontrolled system, when $u$ is a null function, the previous definitions stay valid.

The next definitions extend the notions of global and local identifiability. They focus on the identifiability of one particular parameter when some other parameters, and eventually none, are supposed to be identifiable. These definitions appear naturally when one wants to determine which parameters should be determined in order to turn the model into an identifiable one.\\

\begin{dfn} \label{defidentrelat}
\begin{enumerate}
\item The parameter $ \theta_{a_{r+1}}$ is said to be relatively identifiable with respect to a set of parameters $\{ \theta_{a_1}, \ldots, \theta_{a_r}\}$ (resp. identifiable) if there exists an imput $u$ such that the set of outputs generated by $\tilde \Theta =(\tilde \theta_1,\ldots, \tilde \theta_m) \in \mathcal{D}$ has an empty intersection with those generated by $ \Theta $ when $\theta_{a_1}=\tilde \theta_{a_1},\, \ldots,\, \theta_{a_r}= \tilde \theta_{a_r} $ and $ \theta_{a_{r+1}} \neq \tilde\theta_{a_{r+1}}$ (resp. when $ \theta_{a_{r+1}} \neq \tilde\theta_{a_{r+1}}$).  

\item The relative identifiability study of system $\Gamma^{ \Theta}$ is the determination of the relative identifiability of any parameter relatively to any set of parameters taken among $\theta_{1}, \ldots, \theta_{m}$.\\
\end{enumerate}
\end{dfn}

\begin{remark}\label{remdefidentrelat}
\begin{itemize}
 \item These definitions can be extended to the local case by considering a neighbourhood $\nu(\Theta) \subset \mathcal{D}$ of $\Theta$ instead of $ \mathcal{D}$.
 \item Clearly, system $\Gamma^\Theta$ is globally identifiable if and only if all parameters $\theta_{i}$ ($1\leq i\leq m $) of $\Gamma^\Theta$ are identifiable. Consequently, identifiability of a system can be seen as a particular case of relatively identifiability.
\end{itemize}
\end{remark}

The following proposition is a direct consequence of Definition~\ref{defidentrelat}.

\begin{prp}\label{propdefidentrelat}
If $\theta_i$ is relatively identifiable with respect to an eventually empty subset $\mathcal{P} \subset \{ \theta_{1}, \ldots, \theta_{m}\}$ then $\theta_i$  is relatively identifiable with respect to any subset of $ \{ \theta_{1}, \ldots, \theta_{m}\}$ containing $\mathcal{P} $. 
\end{prp}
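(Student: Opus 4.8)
The plan is to unwind the definitions and show that the condition "relatively identifiable with respect to $\mathcal{P}$" is monotone in $\mathcal{P}$ under inclusion. Write $\mathcal{P} = \{\theta_{a_1},\dots,\theta_{a_r}\}$ and let $\mathcal{Q}$ be any subset of $\{\theta_1,\dots,\theta_m\}$ with $\mathcal{P}\subseteq\mathcal{Q}$, say $\mathcal{Q} = \{\theta_{a_1},\dots,\theta_{a_r},\theta_{a_{r+1}},\dots,\theta_{a_k}\}$ (relabelling indices so that the extra parameters are $\theta_{a_{r+1}},\dots,\theta_{a_k}$, and of course $\theta_i\notin\mathcal{Q}$ since it must be the "new" parameter). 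By hypothesis there is an input $u$ such that for every $\tilde\Theta\in\mathcal{D}$ satisfying $\theta_{a_1}=\tilde\theta_{a_1},\dots,\theta_{a_r}=\tilde\theta_{a_r}$ and $\theta_i\neq\tilde\theta_i$, the output set generated by $\tilde\Theta$ is disjoint from that generated by $\Theta$.

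First I would fix that same input $u$ as the witness for relative identifiability with respect to $\mathcal{Q}$. Then I would take an arbitrary $\tilde\Theta\in\mathcal{D}$ satisfying the (stronger) constraints imposed by $\mathcal{Q}$, namely $\theta_{a_1}=\tilde\theta_{a_1},\dots,\theta_{a_k}=\tilde\theta_{a_k}$ together with $\theta_i\neq\tilde\theta_i$. The key observation is simply that these equalities include, in particular, $\theta_{a_1}=\tilde\theta_{a_1},\dots,\theta_{a_r}=\tilde\theta_{a_r}$; hence $\tilde\Theta$ also satisfies the hypothesis of relative identifiability with respect to $\mathcal{P}$, and the conclusion of that hypothesis — disjointness of the corresponding output sets for this $u$ — applies verbatim. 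Since $\tilde\Theta$ was arbitrary among those meeting the $\mathcal{Q}$-constraints, this is exactly the statement that $\theta_i$ is relatively identifiable with respect to $\mathcal{Q}$.

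There is essentially no obstacle here: the proposition is a purely set-theoretic monotonicity statement, and the only thing to be careful about is the bookkeeping of indices and the fact that adding more equality constraints shrinks the set of competitor vectors $\tilde\Theta$ one has to rule out, so a witness input that works for the smaller constraint set a fortiori works for the larger one. I would also note for completeness that the argument is unchanged if one replaces $\mathcal{D}$ by a neighbourhood $\nu(\Theta)$, so the analogous monotonicity holds for local relative identifiability as well, in line with Remark~\ref{remdefidentrelat}.
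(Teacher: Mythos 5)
Your argument is correct and is exactly the reasoning the paper has in mind: the paper states Proposition~\ref{propdefidentrelat} as a direct consequence of Definition~\ref{defidentrelat}, and your observation that enlarging $\mathcal{P}$ only adds equality constraints, shrinking the set of competitor vectors $\tilde\Theta$ so that the same witness input $u$ still works, is precisely that direct consequence spelled out. Nothing is missing.
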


In the next section, we show how to reduce the relative identifiability study to a semialgebraic set problem.\\

\subsection{A differential algebra method to perform relative identifiability analysis}

The input-output approach consists from system (\ref{saresoudre}) and the Rosenfeld-Groebner algorithm to obtain specific differential polynomials of the following form

\begin{eqnarray}\label{eqpol}
P_i(y,u,\Theta )=m_{0,i}(y,u)+\sum_{k=1}^{n_i}c_{i,k}(\Theta )m_{k,i}(y,u)=0,\,i=1,\ldots,s
\end{eqnarray}
where $(c_{i,k}(\Theta ))_{1\leq k\leq n_i}$ are rational in $\Theta $, $c_{k,i}(\Theta ) \ne c_{l,i}(\Theta )$ if $k\ne l$, $(m_{k,i}(y,u))_{1\leq k\leq n_i}$ are differential polynomials relatively to $y$ and $u$ and $m_{0,i}(y,u)\ne 0$.\\
The computation of these polynomials does not require any consideration of the admissible parameter set $\mathcal{D}\subseteq \R^m$. Moreover, the number of input-output polynomials, detailed in~\cite{dj,VDJ}, is equal to the number of outputs of System~(\ref{eqpol}). \\
Polynomials $P_i$ are classically known as \textit{input-output polynomials} and in the literature, the sequence $(c _{i,j}(\Theta ))_{ 1\leq i\leq s ,\;1\leq j\leq n_i}$ is called the \textit{exhaustive summary}.  \\

In \cite{NOLCOS}, the global identifiability study was brought back to the injectivity study of the function 
$$\begin{array}{cccc} \label{deffunctionphi}
\phi : &  \mathcal{D} & \longrightarrow &\R^{N} \\
 & \Theta & \longmapsto  & (c_{i,j}(\Theta))_{1\leq i\leq s,1\leq j\leq n_i}\\
 &  &  & 
\end{array}$$
where $\displaystyle N=\sum_{i=1}^{s} n_i$.
In this paper, in order to take into account initial conditions or supplementary constraints on parameters, we restrict the domain of $\phi$ to a subset $\mathcal{C}$ of $\mathcal{D}$ (see. section \ref{DefC}).

The next proposition extends, to the case of relative identifiability, the link between global identifiabilty and result summary given in \cite{NOLCOS,VDJ}.

\begin{prp}\label{proprelidentif}
Suppose that for all $i=1,\ldots,s$, the functional determinants\footnote{In order to prove that determinants of Proposition~\ref{proprelidentif} are not identically equal to 0, it is sufficient to verify the linear independence of the $m_{k,i}(\bar y,\bar u)$ ($k=1,\ldots,s_i$). This can be done by computing the functional determinant given by the Wronskian (See~\cite{dj})
 \begin{equation}
\bigtriangleup P_{i}(\bar y,\bar u)=\left|
\begin{array}{ccc} \label{detfunct}
m_{1,i}(\bar y,\bar u)&\ldots &m_{s_i,i}(\bar y,\bar u)\\
m_{1,i}(\bar y,\bar u)^{(1)}&\ldots &m_{s_i,i}(\bar y,\bar u)^{(1)}\\
\vdots& &\vdots \\
m_{1,i}(\bar y,\bar u)^{(s_i-1)}&\ldots &m_{s_i,i}(\bar y,\bar u)^{(s_i-1)}\\
\end{array}
 \right| 
 \end{equation}
 and by verifying that $\bigtriangleup P_{i}(\bar y,\bar u)$ is not identically equal to zero.
} $$\bigtriangleup  P_i(y,u,\Theta )=\det(m _{k,i}(y,u),k=1,\ldots,n_i)$$ are not identically equal to zero. $\theta_{a_{r+1}}$ is relatively identifiable with respect to the eventually empty set of parameters $\{\theta_{a_{1}},\ldots,\theta_{a_{r}}\}$ if and only if, for any $\tilde \Theta =(\tilde  \theta_j)_{j=1,\ldots,m}\in \R^m$ and $\Theta =(\tilde  \theta_j)_{j=1,\ldots,m}\in \R^m$:
\begin{equation}\label{hypot1}
\left\{\begin{array}{ll}
\Theta \in \mathcal{C}& \\
\tilde \Theta \in \mathcal{C}&\\
\theta_{a_1}&=\tilde \theta_{a_1}\\
&\vdots \\
\theta_{a_r}&=\tilde \theta_{a_r}\\
\phi(\Theta)&=\phi(\tilde \Theta) 
\end{array}\right. \Rightarrow 
\theta_{a_{r+1}}=\tilde \theta_{a_{r+1}}
\end{equation} 
\end{prp}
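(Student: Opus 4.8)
The plan is to reduce the relative identifiability condition in Definition~\ref{defidentrelat} to condition~(\ref{hypot1}) by exploiting the fact that two parameter vectors produce the same outputs (for a suitably generic input) if and only if they produce the same exhaustive summary. This is the relative-identifiability analogue of the equivalence used in~\cite{NOLCOS,VDJ}, and the non-vanishing of the Wronskian-type determinants $\bigtriangleup P_i$ is precisely what makes this last equivalence go through. So the argument splits into two implications, and both hinge on one lemma: for an input $u$ chosen so that none of the $\bigtriangleup P_i(y,u,\Theta)$ vanish identically along the trajectory, the equality of outputs $y(\cdot,\Theta)=y(\cdot,\tilde\Theta)$ is equivalent to $\phi(\Theta)=\phi(\tilde\Theta)$.

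First I would establish that lemma. In one direction it is immediate: if $\Theta$ and $\tilde\Theta$ generate the same output $\bar y$ (with the same input $\bar u$), then both $P_i(\bar y,\bar u,\Theta)=0$ and $P_i(\bar y,\bar u,\tilde\Theta)=0$, hence $\sum_{k=1}^{n_i}(c_{i,k}(\Theta)-c_{i,k}(\tilde\Theta))\,m_{k,i}(\bar y,\bar u)=0$; differentiating this relation $n_i-1$ times in $t$ gives a homogeneous linear system in the coefficients $c_{i,k}(\Theta)-c_{i,k}(\tilde\Theta)$ whose matrix has determinant $\bigtriangleup P_i(\bar y,\bar u)$, which by hypothesis is not identically zero, hence nonzero for a generic choice of input and time; therefore all $c_{i,k}(\Theta)=c_{i,k}(\tilde\Theta)$, i.e.\ $\phi(\Theta)=\phi(\tilde\Theta)$. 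Conversely, if $\phi(\Theta)=\phi(\tilde\Theta)$, the input-output polynomials $P_i(\cdot,\cdot,\Theta)$ and $P_i(\cdot,\cdot,\tilde\Theta)$ coincide as differential polynomials in $(y,u)$; since these polynomials, coming from the Rosenfeld--Groebner algorithm, characterize the output behaviour of the model (up to the admissible trajectories), the outputs generated by $\Theta$ and $\tilde\Theta$ coincide. This is the step I expect to be the main obstacle: the clean statement ``same exhaustive summary $\Rightarrow$ same outputs'' requires care about what exactly Rosenfeld--Groebner guarantees (characteristic sets, the role of the initial and separant conditions, and the fact that one may need to discard a lower-dimensional set of degenerate trajectories), and the paper's hypotheses handle this only implicitly through the non-vanishing determinants; I would follow the treatment in~\cite{dj,NOLCOS} here rather than reprove it.

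With the lemma in hand, the proposition follows by unwinding definitions. Assume $\theta_{a_{r+1}}$ is relatively identifiable with respect to $\{\theta_{a_1},\dots,\theta_{a_r}\}$, and take $\Theta,\tilde\Theta$ satisfying the left-hand side of~(\ref{hypot1}): they lie in $\mathcal{C}\subseteq\mathcal{D}$, agree on $\theta_{a_1},\dots,\theta_{a_r}$, and satisfy $\phi(\Theta)=\phi(\tilde\Theta)$. By the lemma (applied with the identifying input $u$ furnished by Definition~\ref{defidentrelat}), $\Theta$ and $\tilde\Theta$ generate the same outputs; since they also agree on $\theta_{a_1},\dots,\theta_{a_r}$, the definition of relative identifiability forces $\theta_{a_{r+1}}=\tilde\theta_{a_{r+1}}$, which is the right-hand side of~(\ref{hypot1}). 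For the converse, suppose~(\ref{hypot1}) holds and let $\Theta,\tilde\Theta\in\mathcal{C}$ with $\theta_{a_j}=\tilde\theta_{a_j}$ for $j\le r$ and $\theta_{a_{r+1}}\neq\tilde\theta_{a_{r+1}}$; we must exhibit an input $u$ for which the two output sets are disjoint. Choosing $u$ so that all $\bigtriangleup P_i$ are not identically zero (possible precisely because of the standing hypothesis), the lemma shows that any common output would force $\phi(\Theta)=\phi(\tilde\Theta)$, whence~(\ref{hypot1}) would give $\theta_{a_{r+1}}=\tilde\theta_{a_{r+1}}$, a contradiction; hence the output sets are disjoint and $\theta_{a_{r+1}}$ is relatively identifiable with respect to $\{\theta_{a_1},\dots,\theta_{a_r}\}$. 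Finally I would remark that the purely algebraic case $r=0$ recovers the result of~\cite{NOLCOS,VDJ}, and that, per Remark~\ref{remdefidentrelat}, the local version is obtained verbatim by replacing $\mathcal{C}$ with $\mathcal{C}\cap\nu(\Theta)$ throughout.
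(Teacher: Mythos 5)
Your proof is correct and takes essentially the same route as the paper's: your lemma's first half is exactly the paper's \emph{Sufficiency} argument (differentiate the relation $\sum_k(c_{i,k}(\Theta)-c_{i,k}(\tilde\Theta))m_{k,i}(y,u)=0$ and use the non-vanishing Wronskian to force $\phi(\Theta)=\phi(\tilde\Theta)$), while your converse direction is the paper's \emph{Necessity} step done there by contrapositive (equal summaries $\Rightarrow$ equal input-output polynomials $\Rightarrow$ equal output sets $\Rightarrow$ non-identifiability). The delicate point you flag --- that identical input-output polynomials guarantee identical output behaviour, which rests on what Rosenfeld--Groebner actually certifies --- is precisely the step the paper also dispatches in a single sentence by identifying the solution sets, so your treatment is no less complete than the original.
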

\emph{Proof--} \textit{Sufficiency} Suppose that there exists $\tilde \Theta \in \mathcal{C}$ such that $\theta_{a_1}=\tilde \theta_{a_1},\ldots,
\theta_{a_r}=\tilde \theta_{a_r}$ and $y(t,\Theta)=y(t,\tilde \Theta)$ for all $t\in [0,T]$. The last equality implies that for all $j\in \N$, $y^{(j)}(t,\Theta)=y^{(j)}(t,\tilde \Theta)$ for all $t\in [0,T]$. By definition of the input-output polynomials $P_i(y,u,\Theta)$ and $P_i(y,u,\tilde \Theta)$, the difference $P_i(y,u,\Theta)-P_i(y,u,\tilde \Theta)=\sum_{k=1}^{n_i}(c_{i,k}(\Theta)-c_{i,k}(\tilde \Theta))m_{k,i}(y,u)$ is equal to 0. Since the functional determinant of the last difference is not identically equal to zero, we have $c_{i,k}(\Theta)-c_{i,k}(\tilde \Theta)=0$ which proves that the exhaustive summaries $\Phi(\Theta)$ and  $\Phi(\tilde\Theta)$ are equal. According to (\ref{hypot1}), $\theta_{a_{r+1}}=\tilde \theta_{a_{r+1}}$ and $\theta_{a_{r+1}}$ is relatively identifiable with respect to the set $\{\theta_{a_{1}},\ldots,\theta_{a_{r}}\}$.\\
\textit{Necessity} By contrapositive. Let us consider two vectors of parameters $\Theta=(\theta_1\, \ldots, \, \theta_{a_m} )$ and $\tilde \Theta=(\tilde\theta_1\, \ldots, \,\tilde \theta_{a_m})  $ such that $\theta_{a_1}=\tilde \theta_{a_1},\, \ldots, \, \theta_{a_r}=\tilde \theta_{a_r}$, $\theta_{a_{r+1}}\neq \tilde \theta_{a_{r+1}}$ and $c_{i,j}(\Theta) = c_{i,j}(\tilde\Theta) $. This implies that, for all $i\in \{1,\, \ldots,\, s\}$, $P_i(y,u,\Theta)=P_i(y,u,\tilde\Theta)$. Therefore, the input-output polynomials, and consequently their sets of solutions, are equal. According to Definition~\ref{defidentrelat}, $\theta_{a_{r+1}}$ is not relatively identifiable with respect to the set $\{\theta_{a_{1}},\ldots,\theta_{a_{r}}\}$.\\


 In the case of a global identifiability study, some approaches had been proposed in the literature to solve the real system $\phi(\Theta)=\phi(\tilde \Theta)$ \cite{NOLCOS,VDJ,DAISY2007,DAISY2010}. But these tools use Groebner basis algorithms which are not adapted for the resolution of general real algebraic systems. Moreover, the eventual inequalities satisfied by parameters can not be considered by these approaches whereas the identifiability result may depend on them (see Example \ref{subsectexemple1}).

In this paper, we propose an automatic procedure to prove Implication (\ref{hypot1}) including the parameter constraints. This procedure is based on semialgebraic tools. Some of these tools, already implemented in computer algebra systems have never been used to certify the (global, local and relative) identifiability of a model. 
 

From the system appearing in the left side of implication (\ref{hypot1}), it is possible to construct a system composed of polynomial equations and inequalities defining a semialgebraic set. Indeed, $\mathcal{C}$ is assumed to be a semialgebraic set and system $c_{i,j}(\Theta)=c_{i,j}(\tilde \Theta)$, $i=1,\ldots,s$, $j=1,\ldots,n_i$ is equivalent to an algebraic system by adding eventually non vanishing conditions on the denominators. These conditions can be added to $C(\Theta)$ and $C(\tilde \Theta)$. \\

Let set, for $ \Theta= ( \theta_{1}, \ldots,  \theta_{m})\in \mathbb{R}^m$, $\tilde \Theta= (\tilde \theta_{1}, \ldots, \tilde \theta_{m})\in \mathbb{R}^m$ and $0\leq r\leq n$, the following set of equations and inequalities:
 \begin{center}
 $
S_{\theta_{a_1}, \ldots, \theta_{a_r}}= C(\Theta)\ \cup\ C(\tilde{\Theta})\ \cup\ \{\theta_{a_i}= \tilde \theta_{a_i} \;\mid \;1\leq i\leq r  \}$ \end{center} $\hspace{50mm} \cup\ \{c_{i,j}(\Theta)=c_{i,j}(\tilde\Theta)\;\mid \;1\leq i\leq s,\; 1\leq j\leq n_i \}\,.
$\\

In order to use semialgebraic tools for proving implication \ref{hypot1}, the following corollary is based on the classical tool of computer algebra techniques called the Rabinowitsch trick (See~\cite{Basu:2006:ARA:1197095,Rabinowitsch}): to show that an algebraic system has a solution with $u\neq 0$, where $u$ is one of its indeterminates, this trick consists in adding the equation $uv-1=0$ to the system, where $v$ is a new indeterminate, and to prove that the new system has a solution. By this way, computer algebra packages testing the emptiness of semialgebraic sets can be used such as \verb HasRealSolution  of the Maple package Raglib ~\cite{RAGLib} for example. 

\begin{cor} \label{proptestp} Let $(\theta_{1}, \ldots, \theta_{m})$ and $(\tilde \theta_{1}, \ldots, \tilde \theta_{m})$ in $\mathcal{C}$. The following conditions are equivalent:
\begin{enumerate}
 \item the parameter $\theta_{a_{r+1}}$ of System $\Gamma^\Theta$ is relatively identifiable with respect to the eventually empty set of parameters $\{\theta_{a_1}, \ldots, \theta_{a_r}\}$;
 \item the system $S_{\theta_{a_1}, \ldots, \theta_{a_r}} \cup\,  \left\lbrace v\left(  \theta_{a_{r+1}}-\tilde \theta_{a_{r+1}}\right)-1= 0\right\rbrace $, where $v$ is a new variable, has no real solution $(\theta_{a_1}, \ldots, \theta_{a_m},\tilde \theta_{a_1}, \ldots, \tilde \theta_{a_m},v)\in \mathbb{R}^{2m+1}$. 
\end{enumerate}
\end{cor}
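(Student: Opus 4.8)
The plan is to derive the corollary directly from Proposition~\ref{proprelidentif} by translating the logical implication (\ref{hypot1}) into a statement about the nonexistence of a real solution, and then applying the Rabinowitsch trick to encode the inequality $\theta_{a_{r+1}}\neq\tilde\theta_{a_{r+1}}$ as a polynomial equation. First I would observe that, by Proposition~\ref{proprelidentif}, the parameter $\theta_{a_{r+1}}$ is relatively identifiable with respect to $\{\theta_{a_1},\ldots,\theta_{a_r}\}$ if and only if the implication (\ref{hypot1}) holds for all $\Theta,\tilde\Theta\in\mathbb{R}^m$. I would then note that the hypotheses on the left-hand side of (\ref{hypot1})—namely $\Theta\in\mathcal{C}$, $\tilde\Theta\in\mathcal{C}$, the equalities $\theta_{a_i}=\tilde\theta_{a_i}$ for $1\le i\le r$, and $\phi(\Theta)=\phi(\tilde\Theta)$—are, after clearing denominators of the rational functions $c_{i,j}$ (adding the corresponding non-vanishing conditions to $C(\Theta)$ and $C(\tilde\Theta)$, as discussed just before the statement), exactly the defining conditions of the semialgebraic set $S_{\theta_{a_1},\ldots,\theta_{a_r}}$. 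Here I use that $\phi(\Theta)=\phi(\tilde\Theta)$ means $c_{i,j}(\Theta)=c_{i,j}(\tilde\Theta)$ for all $1\le i\le s$, $1\le j\le n_i$, which is precisely the last family of equations in $S_{\theta_{a_1},\ldots,\theta_{a_r}}$.

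Next I would rewrite the implication contrapositively: (\ref{hypot1}) holds for all $\Theta,\tilde\Theta$ if and only if there is \emph{no} tuple $(\theta_1,\ldots,\theta_m,\tilde\theta_1,\ldots,\tilde\theta_m)\in\mathbb{R}^{2m}$ simultaneously satisfying all the conditions defining $S_{\theta_{a_1},\ldots,\theta_{a_r}}$ together with $\theta_{a_{r+1}}\neq\tilde\theta_{a_{r+1}}$. At this point I would invoke the Rabinowitsch trick exactly as recalled in the text: the condition $\theta_{a_{r+1}}\neq\tilde\theta_{a_{r+1}}$ is satisfiable (in conjunction with the other equations/inequalities) over $\mathbb{R}$ if and only if the augmented system obtained by introducing a fresh variable $v$ and the equation $v(\theta_{a_{r+1}}-\tilde\theta_{a_{r+1}})-1=0$ is satisfiable over $\mathbb{R}$. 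Indeed, $v(\theta_{a_{r+1}}-\tilde\theta_{a_{r+1}})-1=0$ has a real solution in $v$ precisely when $\theta_{a_{r+1}}-\tilde\theta_{a_{r+1}}\neq 0$, namely $v=1/(\theta_{a_{r+1}}-\tilde\theta_{a_{r+1}})$; conversely any solution of the augmented system forces $\theta_{a_{r+1}}\neq\tilde\theta_{a_{r+1}}$. Hence the nonexistence of a solution $(\theta_1,\ldots,\theta_m,\tilde\theta_1,\ldots,\tilde\theta_m)$ to $S_{\theta_{a_1},\ldots,\theta_{a_r}}$ together with $\theta_{a_{r+1}}\neq\tilde\theta_{a_{r+1}}$ is equivalent to the nonexistence of a real solution $(\theta_1,\ldots,\theta_m,\tilde\theta_1,\ldots,\tilde\theta_m,v)\in\mathbb{R}^{2m+1}$ to $S_{\theta_{a_1},\ldots,\theta_{a_r}}\cup\{v(\theta_{a_{r+1}}-\tilde\theta_{a_{r+1}})-1=0\}$.

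Chaining these equivalences—relative identifiability $\iff$ implication (\ref{hypot1}) $\iff$ nonexistence of a solution with $\theta_{a_{r+1}}\neq\tilde\theta_{a_{r+1}}$ $\iff$ nonexistence of a real solution to the Rabinowitsch-augmented system—gives the stated equivalence between conditions 1 and 2. I do not expect a serious obstacle here, since Proposition~\ref{proprelidentif} already does the analytic work of linking output trajectories to the algebraic system; the only points requiring care are (i) making explicit that clearing the denominators of the rational coefficients $c_{i,j}$ and folding the non-vanishing conditions into $C(\Theta)$, $C(\tilde\Theta)$ does not change the real solution set, which is exactly the remark made immediately before the corollary, and (ii) verifying the Rabinowitsch equivalence over $\mathbb{R}$ rather than over an algebraically closed field, which is immediate because a nonzero real number always has a real reciprocal. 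So the proof is essentially a careful bookkeeping of these reformulations; the substantive content is entirely inherited from Proposition~\ref{proprelidentif}.
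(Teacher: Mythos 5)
Your proof is correct and follows essentially the same route as the paper's: both reduce condition~1 to Implication~(\ref{hypot1}) via Proposition~\ref{proprelidentif}, identify the left-hand side of that implication with the system $S_{\theta_{a_1},\ldots,\theta_{a_r}}$, and use the Rabinowitsch trick to encode $\theta_{a_{r+1}}\neq\tilde\theta_{a_{r+1}}$ by the equation $v\left(\theta_{a_{r+1}}-\tilde\theta_{a_{r+1}}\right)-1=0$. The only cosmetic difference is that you chain equivalences where the paper argues each direction by contrapositive, which is the same argument in a different arrangement.
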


\emph{Proof--} \textit{$1.\, \Rightarrow\, 2.$} By contrapositive. If system $S_{\theta_{a_1}, \ldots, \theta_{a_r}} \cup\,  \left\lbrace v\left(  \theta_{a_{r+1}}-\tilde \theta_{a_{r+1}}\right)-1= 0\right\rbrace $, where $v$ is a new variable, has a real solution, then this solution satisfies  $\theta_{a_{r+1}}-\tilde \theta_{a_{r+1}}\neq 0$. According to Proposition~\ref{proprelidentif}, the parameter $\theta_{a_{r+1}}$ is not relatively identifiable with respect to $\{ \theta_{a_1}, \ldots, \theta_{a_r}\}$.\\

\textit{$2.\, \Rightarrow\, 1.$} By contrapositive. If $\theta_{a_{r+1}}$ of System $S_{\theta_{a_1}, \ldots, \theta_{a_r}}$ is not relatively  identifiable with respect to $\{\theta_{a_1}, \ldots, \theta_{a_r}\}$, there exist two solutions $(\theta_{a_1}, \ldots, \theta_{a_m})$ and $(\theta_{a_1}, \ldots, \theta_{a_r}, \tilde \theta_{a_{r+1}}, \ldots, \tilde \theta_{a_m})$  of this system such that  $\theta_{a_{r+1}} \neq \tilde \theta_{a_{r+1}}$ (See Proposition~\ref{proprelidentif}). This inequality is equivalent to the existence of a real $v$ such that $v \left( \theta_{a_{r+1}}-\tilde \theta_{a_{r+1}}\right)-1=0$. Consequently, the $2m+1$ t-uplet of real $(\theta_{a_1}, \ldots, \theta_{a_m},\theta_{a_1}, \ldots, \theta_{a_r}, \tilde \theta_{a_{r+1}}, \ldots, \tilde \theta_{a_m},v)$ is a solution of system $S_{\theta_{a_1}, \ldots, \theta_{a_r}} \cup\,  \left\lbrace v\left(  \theta_{a_{r+1}}-\tilde \theta_{a_{r+1}}\right)-1= 0\right\rbrace $.\\


\section{Relative identifiability algorithm}\label{section2}

In this section, we present our algorithm for the relative identifiability study of parameters of System $\Gamma^\Theta$.

\subsection{Useful concepts}

In order to describe our algorithm and its outputs, we recall the following notations and definitions of computer science.

\begin{notation} 
$[\theta_{a_1}, \ldots, \theta_{a_{r}}]$ is the list\footnote{In computer science, a list is a finite \textbf{sequence} of elements of a set.} of $r$ distinct elements ($1\leq r \leq m$)  taken among $\{\theta_1,\ldots,\theta_m\}$.
\end{notation}
For the construction of parameter lists by our algorithm, let us recall the following classical definitions.

\begin{dfn} 
\begin{enumerate}
 \item Let $[ \theta_{a_1}, \ldots, \theta_{a_{r}}]$ be a list of $r$ distinct elements of $\{\theta_{1}, \ldots, \theta_{m}\}$. \\The empty list and any list $[ \theta_{a_1}, \ldots, \theta_{a_{i}}]$ ($1\leq  i \leq r-1$) are called prefix of $[ \theta_{a_1}, \ldots, \theta_{a_{r}}]$.
 \item Let $l_1$ and $l_2$ be two lists. The concatenated list $l_1$ \verb cat  $l_2$ is the list obtained by joining the two lists $l_1$ and $l_2$ end-to-end.
 
\end{enumerate}
\end{dfn}
In order to distinguish identifiable parameters to non identifiable ones in a list, we introduce the notation hereafter. 
	
\begin{notation}\label{notationbarre}
Let $l =  [ \theta_{a_1}, \ldots, \theta_{a_{r}}]$ be a list of $r$ elements taken among $\{ \theta_{1}, \ldots, \theta_{m}\} $.

If the parameter $ \theta_{a_1}$ is not identifiable, it will be written $ {{\cancel{\theta}}_{a_1}}$ in the list~$l$.

More generally, if $\theta_{a_i}$ $( 2 \leq i \leq r)$  is not relatively identifiable with respect to $\{\theta_{a_1}, \ldots, \theta_{a_{i-1}}\}$, we write it $ {{\cancel{\theta}}_{a_i}}$ in $l$.
\end{notation}

From now on, in order to lighten the text, a parameter will be said relatively identifiable with respect to a list of parameters if it is relatively identifiable with respect to the corresponding set of parameters. \\

\subsection{Relative identifiability study}\label{subsecalgo}

A way to obtain an exploitable output for an algorithm is to use lists of parameters rather than sets and naturally to consider a tree traversal algorithm in order to avoid useless emptiness tests of semialgebraic sets and also redundant outputs. Such an algorithm consists in constructing all the possible lists of $\theta_{1}, \ldots, \theta_{{m}}$ and in indicating non identifiable parameters of these lists. This approach leads to the following definition which is the output of our algorithm.

\begin{dfn} \label{defindtree} 
The identifiability tree $\mathcal{T}$ of $\Gamma^\Theta$ is the set of all possible lists of parameters taken among $\{ \theta_{1}, \ldots, \theta_{m}\}$ such that, for all list $b \in \mathcal{T}$ and any prefix $p$ of $b$, $p$ is followed in $b$ by an identifiable parameter relatively to $p$ if there exists one. 
\end{dfn}

Note that if lists of $\mathcal{T}$ have a common prefix, tests performed for this prefix will be computed only once. Moreover, in order to avoid some useless tests, the following observations are used in our algorithm.\\

\textbf{Two important observations to improve the efficiency of the identifiability tree algorithm.} 

\begin{enumerate}
 \item \textbf{Observation 1.}  \textit{(Consequence of Proposition~\ref{propdefidentrelat}) 
If a prefix $p$ of a list of $\mathcal{T}$ is followed by successive identifiable parameters, these identifiable parameters can be permuted in this list since it refers to the same information: each of them are relatively identifiable with respect to $p$.} 

 \item \textbf{Observation 2.} \textit{If lists completing a given prefix $p_1$ into elements of  $\mathcal{T}$ are known, these lists complete any permutation of $p_1$ into elements of~$\mathcal{T}$.} Indeed, the relative identifiability of a parameter relatively to a prefix $p$ depends only on the set of parameters appearing in $p$.
  
\end{enumerate}


This last observation leads naturally to a pre-order tree traversal (see~\cite{bg}) in order to complete prefixes with lists of $\mathcal{T}$ already computed. By this way, some emptiness tests of semialgebraic sets can be avoided which decrease the global cost of the computation of $\mathcal{T}$ essentially due to these tests. 

\subsection{Algorithm for studying the relative identifiability}

In this section, we describe our recursive algorithm called \verb IdentifiabilityTree  which returns the identifiability tree $\mathcal{T}$. 
Identifiability tests are realized by applying Corollary~\ref{proptestp}.
 \\


This algorithm takes as input any prefix $p$ of a list of $\mathcal{T}$ and computes all the lists of $\mathcal{T}$ admitting $p$ as prefix. Lists of $\mathcal{T}$ produced by the algorithm are stored in the set $\mathcal{T}'$. In particular, the entire identifiability tree is returned by the call \verb IdentifiabilityTree $(p)$, with $p=[]$ and $\mathcal{T}'$ initialized to $\emptyset$. \\

During any recursive call, the input $p$ is completed first with all the identifiable parameters with respect to $p$ in any order (see~Observation 1) and, after, by each of the non identifiable parameters. Generated prefixes are then completed by recursive calls of the algorithm. When it is possible, Observation 2 is used to complete prefixes using $\mathcal{T}'$ avoiding useless semialgebraic emptiness tests.\\ 

Let us summarize the different steps of the algorithm applied to a prefix~$p$. \\

\noindent\verb IdentifiabilityTree $(p)$ \hrulefill\\

\noindent 1. While there exists some identifiable parameters relatively to $p$ do\\
\phantom{AAAA} 1.1 Complete $p$ with an identifiable parameter $\theta$ ;\\
\phantom{AAAA} 1.2 Check whether Observation 2. can be applied to complete $p$ cat $[\theta]$ into new lists of the identifiability tree. If it is the case, add these new lists to $\mathcal{T}'$, stop this While loop and do not proceed to step 2.  \\
 
\noindent 2. If there exists some non identifiable parameters relatively to $p$, for each of these parameters, do \\
\phantom{AAAA} 2.1 Complete $p$ with a non identifiable parameter $ \cancel{\theta} $ ;\\
\phantom{AAAA} 2.2 If Observation 2. can be applied to complete $p$ cat $[\cancel{\theta}]$ into new lists of the identifiability tree, add these new lists to $\mathcal{T}'$. Otherwise, do a recursive call of the algorithm with $p$ cat $[\cancel{\theta}]$ as input.

\noindent \hrulefill
\bigskip

The following proposition gives the complexity of our algorithm.
\begin{prp} [Complexity] \label{complexity}
The number of emptiness tests of semialgebraic sets performed by our algorithm is bounded by $(2m-\nu+2) 2^{\nu-1}$ where $m$ (resp.~$\nu$) is the number of parameters (resp. of non identifiable parameters) of System~$S$.\\

\end{prp}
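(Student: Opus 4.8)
The plan is to count, over the whole execution of the recursive call \verb IdentifiabilityTree $([\,])$, how many times an emptiness test of a semialgebraic set is actually invoked (these arise only inside step 1.1 / step 1.2, when we look for an identifiable parameter and apply Corollary~\ref{proptestp}, and inside step 2.1 / 2.2 for the non identifiable ones). The key structural fact, coming from Observation 2 (and Observation 1), is that the set of new tests triggered when the algorithm reaches a prefix $p$ depends only on the \emph{set} of parameters occurring in $p$, not on their order; moreover once a prefix with a given underlying set has been fully explored, any permutation of it is completed from $\mathcal{T}'$ without new tests. So the bound should be obtained by bounding the number of \emph{distinct underlying sets of prefixes} at which genuine tests happen, and the number of tests done at each such set.

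First I would set up the bookkeeping. Let $m$ be the number of parameters and $\nu$ the number of non identifiable ones. At a prefix $p$ whose underlying set is $Q$ (with $|Q| = q$), the algorithm, in step 1, repeatedly tests candidate parameters outside $Q$ for relative identifiability with respect to $p$; by Observation 1 all currently identifiable parameters are appended first, so the prefix really grows by its identifiable parameters before any non identifiable one is ever appended. Hence the only prefixes from which a \emph{recursive} call is made (step 2.2) are those of the form: a permutation of some collection of non identifiable parameters, each interleaved with the block of parameters that became identifiable relative to the preceding set. The crucial point is that the branching is governed entirely by the non identifiable parameters: a recursive call descends one level in a binary-type tree indexed by which non identifiable parameter is appended next, and there are at most $\nu$ of them, so the recursion tree of prefixes-from-which-we-recurse has at most $2^{\nu-1}$ leaves / relevant nodes after Observation 2 prunes permutations — this is where the $2^{\nu-1}$ factor comes from. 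Then I would bound the number of tests performed at each such node: to detect the identifiable parameters of a prefix $p$ with underlying set $Q$, one tests each of the at most $m - |Q|$ parameters not in $Q$, plus one test per non identifiable parameter examined in step 2; summing $|Q|$ from $0$ up through the $\nu$ non identifiable levels gives a per-branch contribution of at most $(2m - \nu + 2)$ tests, and multiplying by the $2^{\nu-1}$ relevant branches yields the stated bound $(2m-\nu+2)\,2^{\nu-1}$.

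The main obstacle I anticipate is making the counting argument tight rather than merely a loose $O(m\,2^\nu)$ bound: one must argue carefully that (i) Observation 2 genuinely prevents re-testing on permuted prefixes, so each underlying set $Q$ that is "reachable by recursion" contributes its tests only once, and (ii) the arithmetic of summing the per-level test counts $\sum_{k} (m - k) + (\text{non-id tests})$ over the chain of $\nu$ non-identifiable levels collapses exactly to $2m-\nu+2$, not merely to something of that order. I would handle (i) by a short invariant argument on $\mathcal{T}'$: before the algorithm would recurse on a prefix whose underlying set has already been completed, step 1.2 or step 2.2 detects this and splices in the stored lists, so no set of parameters is ever the underlying set of two distinct recursive calls; and (ii) by an explicit telescoping of the sum, treating the $q=0,1,\dots,\nu$ levels along a maximal chain and noting that a node at level $q$ issues at most $m-q$ identifiability tests in step 1 and at most one test per non identifiable child in step 2. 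Combining the per-chain total with the at most $2^{\nu-1}$ distinct chains gives the claimed bound.
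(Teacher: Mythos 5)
Your plan has a genuine gap: you try to obtain the bound as a product (number of branches) $\times$ (cost per branch), claiming at most $2^{\nu-1}$ relevant branches and at most $2m-\nu+2$ tests per branch, but neither factor is justified and the second is in fact false under your own accounting. The distinct underlying sets of non identifiable parameters that can label a recursion node number up to $2^{\nu}$, not $2^{\nu-1}$; Observations 1 and 2 identify permuted prefixes, but they do not halve the number of subsets, so the claimed branch count has no basis. Worse, along a single root-to-leaf chain your own per-level estimate (at most $m-q$ identifiability tests at a node of level $q$, plus one test per non identifiable child) sums to a quantity of order $m+\binom{\nu}{2}$ or $m\nu$, which exceeds $2m-\nu+2$ as soon as $\nu\geq 3$; there is no telescoping that collapses this sum to $2m-\nu+2$. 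The factorization $(2m-\nu+2)\,2^{\nu-1}$ is an artifact of binomial algebra, not a branches-times-cost decomposition, so reverse-engineering the two factors in this way cannot be repaired into a proof.

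The paper's argument is organized differently: it stratifies by the number $r$ of non identifiable parameters, considers the prefixes of minimal length with exactly $r$ non identifiable parameters, and uses Observation 2 (each such prefix explored with tests accounts, via permutations, for at least $r!$ prefixes completed for free) to bound the number of test-requiring prefixes at level $r$ by $\binom{\nu}{r}$. It then charges one test per such prefix for its last non identifiable parameter, giving $\sum_{r=0}^{\nu}\binom{\nu}{r}=2^{\nu}$ tests for non identifiable parameters, and at most $m-r$ tests per level-$r$ prefix for the identifiable parameters, giving $\sum_{r=0}^{\nu}(m-r)\binom{\nu}{r}=(2m-\nu)2^{\nu-1}$; adding the two yields $(2m-\nu+2)2^{\nu-1}$. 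If you want to salvage your approach, you should replace the ``branches $\times$ per-branch cost'' structure by exactly this kind of per-level (per-subset-size) accounting, since the global sum, not any single chain, is what matches the stated bound.
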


\begin{proof}
Let $\mathcal{T}_r$ the set of prefixes of minimal length of lists of $\mathcal{T}$ admitting exactly $r$ non identifiable parameters $(0\leq r \leq \nu)$. Let $\mathcal{C}_r$ be the subset of $\mathcal{T}_r$ composed of prefixes for which the non identifiabilty of parameters require exactly $r$ emptiness tests of semialgebraic sets.

For each prefix of $\mathcal{C}_r$ constructed by the algorithm, at least $r!$ prefixes of $\mathcal{T}_r$ are completed into lists of $\mathcal{T}$ using Observation 2 that is without any tests. Since there is at most ${\nu(\nu-1) \cdots (\nu-(r-1))}$ prefixes, up to a permutation of non identifiable parameters, $\mathcal{C}_r$ contains at most $ {\nu \choose {r}}$ prefixes.

This implies that the number of tests needed for the last non identifiable parameters appearing in the lists of  $\mathcal{C}_r$ is bounded by ${\nu \choose r}$ and, consequently, that the number of tests performed by our algorithm for the determination of the non identifiable parameters is bounded by $\sum_{r=0}^{\nu}{\nu \choose {r}}  =  2^{\nu}\,.$

Moreover, for each prefix $p$ of $\mathcal{T}_r$, there are $m-r$ identifiable parameters in the list of $\mathcal{T}$ corresponding to $p$. Consequently, at most $(m-r ){\nu \choose {r}}$ tests are performed for the identifiable parameters of lists of $\mathcal{T}$ completing lists of  $\mathcal{T}_r$. This implies that  the number of tests realized by our algorithm for the identifiable parameters is bounded by $$\sum_{r=0}^{\nu}(m-r){\nu \choose {r}}  = (2 m - \nu  )  2^{\nu-1}\,.$$

Therefore, the number of test realized by our algorithm for computing the identifiability tree
 is bounded by $(2 m - \nu + 2 ) 2^{\nu-1}$. 
 \end{proof}

%
%
%
%
%

\subsection{Steps of the method} \label{SubSectMethode}
The step for studying relative identifiability of System~(\ref{saresoudre}) are summed up below:\\

\noindent \textbf{Step 1. } Determination of the set of conditions $C(\Theta)$, including eventually the initial conditions.\\
\noindent \textbf{Step 2. } Elimination of unobservable state variables using the Rosenfeld-Groebner algorithm in order to obtain the input output polynomials of the system.\\
\noindent \textbf{Step 3. } Construction of the exhaustive summary.\\
\noindent \textbf{Step 4. } Verification of the assumption of Proposition~\ref{proprelidentif}.\\
\noindent \textbf{Step 5. } Computation of the relative identifiability tree by our algorithm.\\

\section{Examples}\label{secexemples}

The examples presented in this section had been treated by an implementation of our algorithm in the computer algebra system Maple. In order to test relative identifiability using Corollary \ref{proptestp}, emptiness tests of semialgebraic sets had been realized with the function \verb IsEmpty  of the Maple package \verb SemiAlgebraicSetTools . \\

The first example revisits the identifiability of the model introduced by \cite{Holmberg1982} and describes the microbial growth in a batch reactor. The identifiability tree study shows that, for this example, initial conditions do not change the identifiability of the model contrary to parameter constraints. Moreover, with a supplementary observation, we deduce the identifiability of the model from the output of our algorithm.\\

The second example of this section, dealing with the epidemiological model taken from~\cite{Moulay2010}, is more complex in term of number of parameters. Despite the cost of the computation and the size of the exhaustive summary and implicitly the number of algebraic polynomials involved in the input-output polynomial, our algorithm provides lists of parameters that should be estimated to turn this model into an identifiable one.

\subsection{Microbial growth in a batch reactor}
\label{subsectexemple1}
This model is known as being unidentifiable from an impulse input experiment (see~\cite{chappell}). The dynamic model for the growth process is governed by the two following equations: 
\begin{equation}
\Gamma^\Theta~\left\{
\begin{array}{l}\label{eqexemple1a}
\dot x(t)= \dfrac{\mu \, s(t) \, x(t)}{K_S + s(t)} -m Y \, x(t),\\
\dot s(t)=\dfrac{-\mu \, s(t)\, x(t)}{Y\, (K_S+s(t))}
\end{array}\right.
\end{equation}
with initial conditions 
\begin{equation}
\begin{array}{l}
x(0^+)=x_0,\\
s(0^+)=s_0
\end{array}
\end{equation}

\noindent and where $x$ is the concentration of microorganisms; $s$, the concentration of growth-limiting substrate, $\mu$ the maximum velocity of the reaction; $K_S$ the Michaelis-Menten constant and $Y$ the yield coefficient. Due to the nature of the parameters, $\mu$, $K_S$, $m$ and $Y$ are supposed to be positive reals.\\ 

Let us consider that the output is $y=x$. We now apply the different steps of the method proposed at Section~\ref{SubSectMethode}.\\

\noindent \textbf{Step 1.} The significations of the parameters lead to the set of conditions $ C(\Theta)=\{\mu > 0,  K_S >0,  m>0 , Y>0\}$.\\
\noindent \textbf{Step 2.} The Rosenfeld-Groebner algorithm applied to System \ref{eqexemple1a} with the elimination order induced by $ y \prec x \prec s $  returns the following input-output polynomial:
\begin{equation}
\begin{array}{l}
(Y^3m^3-2Y^2m^2\mu +Ym\mu^2)y^3+ (3Y^2m^2-4Ym\mu +\mu^2)y^2\dot y\qquad\qquad\\\qquad\qquad\qquad\qquad\qquad\qquad+K_SY\mu (y\ddot y -\dot y^2)+(3Ym-2\mu )y\dot y^2+\dot y^3\,.
\end{array}
\end{equation}
\noindent \textbf{Step 3. } From this differential polynomial, we obtain the exhaustive summary $$( \mu KSY,\; -3Ym+2\mu,\;  3Y^2m^2-4Ym\mu+\mu^2,\;  Y^3m^3-2Y^2m^2\mu+Ym\mu^2)$$                                                                                                                                                          
which is  also the image of $\Theta = (\mu, K_S, m,Y)$ by $\phi$ (See Section~\ref{deffunctionphi}).\\
\noindent \textbf{Step 4. } To verify that assumption of Proposition~\ref{proprelidentif} is satisfied, computer algebra system is used to verify that the functional determinant~(\ref{detfunct}) is not identically equal to 0.\\
\noindent \textbf{Step 5. } The relative identifiability tree can then be computed by Algorithm \verb IdentifiabilityTree . The algorithm runs through the full tree of Figure~\ref{Tree1}.

\begin{figure}[!h]
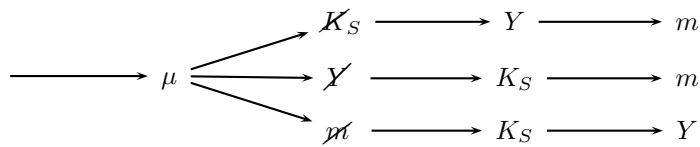

\centering
\psset{arrows=->}
\pstree[treemode=R,levelsep=15ex,treesep =4mm, nodesep=5pt]{\TR{} }{
        \pstree{ \TR{${\mu}$} }{\pstree{ \TR{${\cancel{K}_S}$} }{\pstree{ \TR{${Y}$} }{\TR{${m}$}}}
				\pstree{ \TR{${\cancel{Y}}$  }}{\pstree{ \TR{${K_S}$} }{\TR{${m}$}}}
				\pstree{ \TR{${\cancel{m}}$  }}{\pstree{ \TR{${K_S}$} }{\TR{${Y}$}}}}
                    }
\caption{Tree transversal realized by Algorithm IdentifiabilityTree } \label{Tree1}	
\end{figure}

The output of the algorithm is $\mathcal{T} =  \{[\mu, \cancel{K}_S, Y, m], [\mu, \cancel{Y}, K_S, m], [\mu, \cancel{m}, K_S, Y]\}$. \\

These lists show that $\mu$ is identifiable and that none of the three other parameters are identifiable. Remark that if one parameter among $ K_S$, $m$ or $Y$ is known then the system is identifiable.\\

\begin{remark} 

\begin{enumerate}
 \item In this example, the role of the constraints imposed on the parameters plays a crucial role for establishing the identifiability of System~\ref{eqexemple1a}. Indeed, if constraints $C(\Theta)$ are not considered, i.e. if $C(\Theta)= \emptyset $, we obtain \begin{footnotesize}$$\mathcal{T} = \{ [\mu, \cancel{K_S}, \cancel{Y}, \cancel{m}], [\mu, \cancel{K_S}, \cancel{m}, \cancel{Y}], [\mu, \cancel{Y}, \cancel{K_S}, \cancel{m}], [\mu, \cancel{Y}, \cancel{m}, \cancel{K_S}], [\mu, \cancel{m}, \cancel{K_S}, \cancel{Y}], [\mu, \cancel{m}, \cancel{Y}, \cancel{K_S}]\}\,.$$                                                                                                                                                                                                                                                                                                                         	     \end{footnotesize}
Contrary to the previous situation, the tree parameters have to be identified to turn the system into an identifiable one. 

 \item In order to evaluate the role of initial conditions, we introduce the new unknown parameters $x_0$, $s_0$, $sp_0$, $xp_0 $ corresponding respectively to $x(0)$, $\dot x(0) $, $s(0)$, $\dot s(0)$ and we add to $C(\Theta)$ equations obtained from System~(\ref{eqexemple1a}) evaluated at $t=0$. 
 
Computation of $\mathcal{T}$ shows that in any list, the parameter $\mu$ is identifiable and
										\begin{itemize}                                        
                                         \item either two identifiable parameters among $x_0$, $s_0$, $sp_0$, $xp_0 $ are not identifiable and exactly one  parameter among $K_s$, $Y$ and $ m$ is not identifiable;
                                         \item or three parameters among $x_0$, $s_0$, $sp_0$, $xp_0 $ are not identifiable.
                                        \end{itemize}
We can conclude, from the second case that if three initial conditions are known, $K_s$, $Y$ and $ m$ are identifiable.

 \item In the same way, without considering the initial conditions, when outputs $x$ and $s$ are supposed to be observed, the identifiability tree is reduced to $ \{[K_S, Y, m, \mu]\}$. This means that the model is identifiable.
\end{enumerate}

\end{remark}

\subsection{Chikungunya model} \label{section_Chikungunya}
The following model $\Gamma^\Theta$ is an epidemiological model describing the propagation of the Chikungunya disease to human population proposed in~\cite{Moulay2010}. In \cite{Shousheng2014}, an identifiability study had been done on this model assuming that some parameters are known. In this section, we assume that none of the parameters are known and we propose to find lists of key parameters which turn  $\Gamma^\Theta$ into an identifiable model.\\

$\Gamma^\Theta~~~\begin{cases}\Gamma^\Theta_1
\begin{cases}
E'(t)=b A(t)\left( 1-\dfrac{E(t)}{k_E}\right) -(s+d)E(t)  \\
L'(t)=s E(t)\left( 1-\dfrac{L(t)}{k_L}\right) -(s_L+d_L)L(t)  \\
A'(t)=s_L L(t)-d_m A(t) \\
\end{cases}
  \\
~  \\\Gamma^\Theta_2
\begin{cases}
S_H'(t)=-\left(b_H+\beta_H I_m(t)\right)S_H(t)+b_H    \\
I_H'(t)=\beta_H I_m(t) S_H(t)-(\gamma+b_H)I_H(t)    \\
I_m'(t)=-\left( s_L\dfrac{L(t)}{A(t)}+\beta_m I_H(t)\right) I_m(t)+\beta_m I_H(t)   \\
\end{cases}
\end{cases}$\\

Let us explain briefly this system. Equations appearing in $\Gamma^\Theta_1$ correspond to the three biological steps in the life cycle of the mosquito transmitting the disease : $E$ is the number of eggs, $L$ the number of pupae and $E$ the number of adult females. Assuming that the number of larvae can be counted weekly by biologists, this variable is considered as one of the measured variable of the model. In System $\Gamma^\Theta_2$, $S_H$ corresponds to the number of humans susceptible to be infected, $I_H$ to the number of infected humans and $I_M$ to infected mosquitoes. In $\Gamma^\Theta_2$, $S_H$ and $I_H$ can be supposed to be measured variables. 
As in~\cite{Shousheng2014}, let set $y_1= L$, $y_2= S_H$ and $y_3 = I_H$.\\

The vector of unknown parameters is $\Theta=(k_E, k_L, b_H, \beta_H , \beta_m , d_L, d_m, \gamma , b, d, s, s_L )  $ where
\begin{itemize}
 \item $b$ is the intrinsic rate of eggs, $s$ (resp. $s_L$) is the
transfer rate from $E$ to $L$ (resp. from $L$ to $A$);
 \item $k_E$ (resp. $k_L$ ) is the carrying capacity of $E$
(resp. carrying capacity of $L$);
 \item $d$, $d_L$ and $d_M$ are the rates of natural deaths for
eggs, larvae and adults;
 \item $b_H$ the human birth;
 \item $\gamma$ is the transfer rate between infected humans
and recovered humans;
 \item $\beta_H$ (resp. $\beta_M$) is the infectious contact rate between susceptible humans and vectors (resp. susceptible mosquitoes and humans).
\end{itemize}

Naturally, real parameters of System~$\Gamma^\Theta$ are supposed to be positive which leads to set  $C(\Theta)=\{b_H>0, \,\beta_H>0, \,\gamma>0, \,s_L>0, \,\beta_M>0, \,d>0, \,d_L>0, \,d_M>0, \,b>0, \,s>0, \,k_E>0, \,k_L>0, \,b_H>0\}\,.$

In order to apply the Rosenfeld-Groebner algorithm to system $\Gamma^\Theta$, the last equation of $\Gamma_2^{\Theta}$ is multiplied by function $A$. This algorithm used with the elimination order induced by $$[y_1, y_2,y_3]\prec [S_H,  I_H, L,  I_M,E,  A ]  $$ returns three differential equations linking unknown parameters and functions $y_1$, $y_2$, $y_3$. One of these equations does not have a constant coefficient $c_{ij}$ and therefore its coefficients can be estimated uniquely up to a multiplicative constant from experimental values (see~\cite{VDJ}). This equation have to be divided by one of these coefficients: $k_E k_L$ is used in this application. From these equations, we obtain an exhaustive summary containing the 694 coefficients of the latter three differential equations. This exhaustive summary can be reduced to 212 coefficients not differing by a multiplicative constant. 
Our algorithm applied to the system obtained from $C(\Theta)$ and function $\phi$ returns the following set of \mbox{lists
\footnote{Despite the size of the semialgebraic system, the time needed for the computation of the identifiability tree is 880 s with Intel(R) Core(TM) i7 2.5 GHz processor with 8 GO of RAM.}:}
\begin{small} 

\begin{center}

\begin{tabular}[c]{cc}

$\{  [k_l,\,b_H,\,\beta_H,\,\beta_M,\,d_m,\,g,\,\cancel{k}_e,\,d,\,s,\,\cancel{b},\,d_l,\,s_l] ,$ & $[k_l,\,b_H,\,\beta_H,\,\beta_M,\,d_m,\,g,\,\cancel{k}_e,\,d,\,s,\,\cancel{d}_l,\,b,\,s_l] ,\,$  \\
  $[k_l,\,b_H,\,\beta_H,\,\beta_M,\,d_m,\,g,\,\cancel{k}_e,\,d,\,s,\,\cancel{s}_l,\,b,\,d_l] ,$ & $[k_l,\,b_H,\,\beta_H,\,\beta_M,\,d_m,\,g,\,\cancel{b},\,\cancel{k}_e,\,d,\,d_l,\,s,\,s_l] ,$  \\
$[k_l,\,b_H,\,\beta_H,\,\beta_M,\,d_m,\,g,\,\cancel{b},\,\cancel{d},\,k_e,\,d_l,\,s,\,s_l] ,$ & $[k_l,\,b_H,\,\beta_H,\,\beta_M,\,d_m,\,g,\,\cancel{b},\,\cancel{d}_l,\,k_e,\,d,\,s,\,s_l] ,$   \\

     $[k_l,\,b_H,\,\beta_H,\,\beta_M,\,d_m,\,g,\,\cancel{b},\,\cancel{s},\,k_e,\,d,\,d_l,\,s_l] ,$ & $[k_l,\,b_H,\,\beta_H,\,\beta_M,\,d_m,\,g,\,\cancel{b},\,\cancel{s}_l,\,k_e,\,d,\,d_l,\,s] ,$   \\
 $[k_l,\,b_H,\,\beta_H,\,\beta_M,\,d_m,\,g,\,\cancel{d},\,k_e,\,s,\,\cancel{b},\,d_l,\,s_l] ,$ &    $[k_l,\,b_H,\,\beta_H,\,\beta_M,\,d_m,\,g,\,\cancel{d},\,k_e,\,s,\,\cancel{d}_l,\,b,\,s_l] ,$   \\
  $[k_l,\,b_H,\,\beta_H,\,\beta_M,\,d_m,\,g,\,\cancel{d},\,k_e,\,s,\,\cancel{s}_l,\,b,\,d_l] ,$ & $[k_l,\,b_H,\,\beta_H,\,\beta_M,\,d_m,\,g,\,\cancel{d}_l,\,sl,\,\cancel{k}_e,\,b,\,d,\,s] ,$   \\

     $[k_l,\,b_H,\,\beta_H,\,\beta_M,\,d_m,\,g,\,\cancel{d}_l,\,s_l,\,\cancel{b},\,k_e,\,d,\,s] ,$ & $[k_l,\,b_H,\,\beta_H,\,\beta_M,\,d_m,\,g,\,\cancel{d}_l,\,s_l,\,\cancel{d},\,k_e,\,b,\,s] ,$  \\
 $[k_l,\,b_H,\,\beta_H,\,\beta_M,\,d_m,\,g,\,\cancel{d}_l,\,s_l,\,\cancel{s},\,k_e,\,b,\,d] ,$ &    $[k_l,\,b_H,\,\beta_H,\,\beta_M,\,d_m,\,g,\,\cancel{s},\,k_e,\,d,\,\cancel{b},\,d_l,\,s_l] ,$  \\
 $[k_l,\,b_H,\,\beta_H,\,\beta_M,\,d_m,\,g,\,\cancel{s},\,k_e,\,d,\,\cancel{d}_l,\,b,\,s_l] ,$ & $[k_l,\,b_H,\,\beta_H,\,\beta_M,\,d_m,\,g,\,\cancel{s},\,k_e,\,d,\,\cancel{s}_l,\,b,\,d_l] ,$  \\

     $[k_l,\,b_H,\,\beta_H,\,\beta_M,\,d_m,\,g,\,\cancel{s}_l,\,d_l,\,\cancel{k}_e,\,b,\,d,\,s] ,$ & $[k_l,\,b_H,\,\beta_H,\,\beta_M,\,d_m,\,g,\,\cancel{s}_l,\,d_l,\,\cancel{b},\,k_e,\,d,\,s] ,$\\
 $[k_l,\,b_H,\,\beta_H,\,\beta_M,\,d_m,\,g,\,\cancel{s}_l,\,d_l,\,\cancel{d},\,k_e,\,b,\,s] ,$ &     $[k_l,\,b_H,\,\beta_H,\,\beta_M,\,d_m,\,g,\,\cancel{s}_l,\,d_l,\,\cancel{s},\,k_e,\,b,\,d]    \} $ \\  
\end{tabular}
\end{center}
 \end{small}
   
This output shows that $k_l, b_H, \beta_H, \beta_M, d_m, g $ are identifiable. In order to identify the whole system, this result indicates which pairs of other parameters have to be estimated.

\section{Conclusion}\label{conclusion}
The introduction of the \textit{relative identifiability} definition permits to elaborate strategies for studying parameters identifiability of nonlinear models. The proposed algorithm does not require any experimental values of inputs and outputs and gives a mean for studying \textit{a priori} relative identifiability of differential models. This method enables the user to take into account inequalities satisfied by parameters and initial conditions to study their identifiability.\\This method is composed of two steps. The first one consists in the determination of algebraic relations between the outputs, the inputs and the unknown parameters by a classical elimination method used in differential algebra. The second step consists in using the algorithm proposed in this paper in order to determine the identifiability of any parameter relatively to any set of other parameters by testing emptiness of semialgebraic sets.\\ The output of this algorithm can be used, in particular, to determine which parameters must be known to make the system identifiable.

\bibliographystyle{plain} 

\end{document}